\title{Energy of the Coulomb gas on the sphere at low temperature} 
\author{
\ Carlos Beltr\'an\footnote{
Departamento MATESCO, Universidad de Cantabria. Avda. Los Castros s/n, Santander, Spain. Email: \href{mailto:beltranc@unican.es}{\nolinkurl{beltranc@unican.es}}. Partially supported by MICINN grants MTM2017-83816-P and MTM2017-90682-REDT and by Banco de Santander-Universidad de Cantabria grant 21.SI01.64658. 
}\,,
\qquad 
Adrien Hardy\footnote{
Laboratoire Paul Painlev\'e, Universit\'e de Lille,
Cit\'e Scientifique, 59655 Villeneuve d'Ascq Cedex, 
France. 
Email: \href{mailto:adrien.hardy@math.univ-lille1.fr}{\nolinkurl{adrien.hardy@univ-lille.fr}}. Partially supported by  ANR JCJC {BoB} (ANR-16-CE23-0003) and Labex {CEMPI} (ANR-11-LABX-0007-01).
}}
\DeclareUrlCommand\email{\urlstyle{rm}}
\numberwithin{equation}{section}
\newtheorem{theorem}{Theorem}[section]
\newtheorem{lemma}[theorem]{Lemma}
\newtheorem{corollary}[theorem]{Corollary}
\newtheorem{proposition}[theorem]{Proposition}
\theoremstyle{definition} 
\newtheorem{Remark}[theorem]{Remark}
\newcommand{\eq}{\begin{equation}}
\newcommand{\qe}{\end{equation}}
\newcommand{\R}{\mathbb{R}}
\renewcommand{\S}{\mathbb{S}}
\newcommand{\C}{\mathbb{C}}
\newcommand{\E}{\mathbb{E}}
\renewcommand{\P}{\mathbb{P}}
\newcommand{\cO}{\mathcal{O}}
\renewcommand{\d}{ {\mathrm d}}
\newcommand{\e}{\mathrm{e}}
\newcommand{\HN}{\mathscr{H}_N}
\renewcommand{\H}{\mathscr{H}}
\renewcommand{\epsilon}{ \varepsilon}
\renewcommand{\phi}{ \varphi}
\begin{document}
\maketitle

\begin{abstract} We consider the Coulomb gas of $N$ particles on the sphere and show that the logarithmic energy of the configurations approaches the minimal energy up to an error of order $\log N$, with exponentially high probability and on average, provided the temperature is  $\cO(1/N)$.  
\end{abstract}

\section{Introduction and statement of the result}
\label{sec:intro}

\paragraph{Smale's 7th problem.}
Let $\|\cdot\|$ be the Euclidean norm on $\R^3$ and $$\S:=\big\{x\in\R^3: \; \|x\|= 1\big\}$$ the unit sphere. Consider the logarithmic energy of a configuration $x_1,\ldots,x_N\in\S$,
$$
\HN(x_1,\ldots,x_N):=\sum_{i\neq j}\log\frac1{\|x_i-x_j\|}.
$$
The 7th problem from \cite{Sma00}'s list of mathematical problems for the next century asks to find for every $N\geq 2$ a configuration $x_1,\ldots,x_N\in\S$ and  $c>0$ independent on $N$ such that
\eq
\label{Smale}
\HN(x_1,\ldots,x_N)-\min_{\S^N}\HN \leq  c\log N.
\qe
More precisely, quoting Smale:  ``For a precise version one could ask for a real number algorithm in the sense of \cite*{BCSS}
which on input $N$ produces as output distinct points $x_1,\ldots,x_N$ on the 2-sphere satisfying \eqref{Smale} with
halting time polynomial in $N$''.

\paragraph{Large $N$ expansion.}One difficulty in this problem is that the large $N$ behavior of $\min_{\S^N}\HN$ is not even known up to precision $\log N$. Indeed, the actual knowledge is that 
\begin{equation}\label{eq:as}
\min_{\S^N}\HN=V_{\log}(\S)\,N^2-\frac12\,N\log N+C_{\log}\,N+o(N),\qquad N\to\infty,
\end{equation}
where the constant $$V_{\log}(\S)=\min_{\mu\in\mathcal P(\S)}\iint \log\frac{1}{\|x-y\|}\,\mu(\d x)\mu(\d y)=\frac12-\log2$$ is the minimal logarithmic energy over the space $\mathcal P(\S)$ of probability measures on $\S$.

The exact value of the constant $C_{\log}$ in \eqref{eq:as} is still conjectural. A series of papers by \cite*{Wagner,RSZ94,Dubickas} and \cite{Brauchart2008} gave upper and lower bounds for $C_{\log}$ as well as similar bounds for other choices of energies, but indeed the existence of $C_{\log}$ has  only recently been obtained by \cite{BS18} where it is expressed in terms of the minimum of the renormalized energy introduced by \cite{SS12}. In the same paper it is also proved that
\begin{equation}\label{eq:upper}
C_{\log}\leq 2\log 2 +\frac12\log\frac23+3\log\frac{\sqrt\pi}{\Gamma(1/3)}=-0.0556053\ldots.
\end{equation}
This upper bound is conjectured to be an equality, see \cite*{BHS12}. \cite{BS18} have also shown this conjecture is equivalent to the conjecture that the triangular lattice minimizes the renormalized energy. The tightest known lower bound $C_{\log}\geq -0.2232823526\ldots$, proved by \cite{Dubickas}, seems thus to be far from optimal.\\

One may look for configurations of $N$ points defined deterministically on $\S$ which attain small values for $\HN$, but it turns out to be very difficult to compute explicit asymptotics for any reasonable choice of points; \cite*{SaffSurvey} made many numerical experiments analyzing different constructions, but none of them seems to reach the upper bound for $C_{\log}$ in \eqref{eq:upper}.

\paragraph{Random configurations on the sphere.}

A possible strategy to  attack the  problem is to look for random configurations on $\S$ whose logarithmic energy could satisfy \eqref{Smale} on average, or with high probability. If one naively picks $x_1,\ldots,x_N$  at random uniformly and independently on $\S$, then an easy computation yields a formula for the mean energy,
$$
\E_{\text{Unif}}\big[\HN\big(x_1,\ldots,x_N)]=V_{\log}(\S) \, N^2+(\log2-1/2)N.
$$ 
Thus,  unstructured configurations do not even reach precision $N$ on average.  \cite*{ABS11} suggested instead to take for $x_i$'s the zeros of Elliptic  polynomials after  a stereographic projection. These are random polynomials on $\C$  defined by
$$
P_N(z):=\sum_{k=0}^{N}\sqrt{{N}\choose{k}}\,\xi_k \,z^k
$$
where the $\xi_k$'s are i.i.d standard complex gaussian random variables. Up to multiplication by non-vanishing holomorphic functions, they are the only gaussian analytic functions (GAF) whose zeros are invariant under the isometries of the sphere; they are also known as the spherical GAFs, see  \citep{HKPV09}.  \cite{ABS11} proved that the mean energy of these random configurations equals
$$
\E_{\text{GAF}}\big[\HN(x_1,\ldots,x_N)\big]=V_{\log}(\S)\,N^2-\frac12\,N\log N+(\log2 -1/2  )N
$$
 which reaches the precision $N$, but not more; note that $\log2-1/2=0.1931472\ldots$  See also   \citep{Zhong,ZhongZelditch, ZeZe10, Butez, BuZe17} for related results on the zeros of random polynomials.
 
 Another natural attempt at using random configurations for this problem is to consider the determinantal point process (DPP) on $\C$ known as the spherical ensemble in random matrix theory. The simplest description of the spherical ensemble, due to \cite{Krishnapur}, is as follows: choose two matrices $A,B$ whose entries are i.i.d standard complex gaussian random variables and compute the $N$ eigenvalues of $A^{-1}B$. Up to a stereographic projection, it turns out that these points are quite well distributed on $\S$ on average. Indeed, \cite{AZ15} obtained for these random configurations that, as $N\to\infty$,
$$
\E_{\text{DPP}}\big[\H_N(x_1,\ldots,x_N)\big]=V_{\log}(\S)\,N^2-\frac12\,N\log N+ \big(\log 2-\gamma/2\big)\,N-\frac14+\cO(1/N),
$$
where $\gamma$ is the Euler constant; hence $\log 2-\gamma/2=0.4045393\ldots$ Finally, a particular random construction with seemingly small energy values based on the distribution of charges along parallels in $\S$ is currently being studied by \cite{Ujue}.

All these bounds (analytical and numerical) are still far from the upper bound in \eqref{eq:upper}.

\paragraph{The Coulomb gas on the sphere.}  Another natural random configuration associated with this problem is the Coulomb gas on $\S$, which is the main character of this work; for references see e.g. \cite[Section 15.6]{For10}. More precisely, let $\sigma$ be the uniform measure on $\S$ normalized so that $\sigma(\S)=1$, namely $\sigma:=(4\pi)^{-1}\mathrm{Vol}$.  For any $N\geq 2$ and $\beta >0$,  consider the probability measure on $\S^N$,
$$
\P_{N,\beta}(\d x):=\frac1{Z_{N,\beta}}\,\e^{-\beta \HN(x)} \sigma^{\otimes N}(\d x),
$$ 
where we introduced the normalisation constant known as the partition function,
$$
{Z_{N,\beta}}:=\int_{\S^N}\e^{-\beta \HN(x)}\sigma^{\otimes N}(\d x).
$$
We denote by $\E_\beta$ the expectation with respect to $\P_{N,\beta}$. Here $\HN(x)$ means $\HN(x_1,\ldots,x_N)$ when $x=(x_1,\ldots,x_N)\in\S^N.$  Physically, $\HN(x)$ represents the electrostatic energy of a configuration $x_1,\ldots,x_N$ of $N$ identical charges placed on the sphere, following the classical laws of 2D electrostatics.  $\P_{N,\beta}$ is known in statistical physics as the canonical Gibbs measure associated with this energy and the random configurations it generates are referred to as the Coulomb gas at inverse temperature $\beta$. Typical configurations of the Coulomb gas will try to minimize $\HN$ because of its density distribution proportional to $\e^{-\beta \HN}$. It is thus tempting to evaluate the energy $\HN(x)$ for such random configurations so as to approximate the minimum  of $\HN$. In fact, when $\beta=1$ the Coulomb gas benefits from an integrable structure: up to stereographic projection, this is the spherical ensemble mentioned above and studied by \cite{AZ15}.  But the larger $\beta$ is the more likely it is for $\P_{N,\beta}$ to generate a configuration close to a minimizer, although the determinantal structure is lost when $\beta\neq1$ making exact computations out of reach. The main achievement of this work is to show that the Coulomb gas on the sphere at temperature $\cO(1/N)$ provides almost minimizing configurations in the sense of Smale's problem with high probability as well as on average.

\begin{theorem} 
\label{th:main}
For any $N\geq 2$ and any $\beta\geq 1$, let $x_1,\ldots,x_N$ be the random configuration on $\S$ with joint distribution $\P_{N,\beta}$. For any constant $c>0$ we have
\eq
\HN(x_1,\ldots,x_N)-\min_{\S^N}\HN \leq c\log N
\qe
with probability at least 
$1-\e^{-\kappa N}$,
where
$$ \kappa:=c\,\frac\beta N\log N -\log\beta -8\log N.
$$
Moreover, the mean energy satisfies
$$
\E_{\beta}\big[\HN(x_1,\ldots,x_N)\big]-\min_{\S^N}\HN\leq \frac N\beta \Big(\log\beta +8\log N\Big).
$$
\end{theorem}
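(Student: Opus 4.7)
The plan is to reduce both claims to a sharp lower bound on the partition function $Z_{N,\beta}$ and then apply a crude exponential Markov inequality. On $\{\HN \geq \min_{\S^N}\HN + t\}$ we have $\e^{-\beta \HN} \leq \e^{-\beta(\min_{\S^N}\HN + t)}$, so
\[
\P_{N,\beta}\!\left(\HN \geq \min_{\S^N}\HN + t\right) \leq \frac{\e^{-\beta(\min_{\S^N}\HN + t)}}{Z_{N,\beta}}.
\]
With $t = c\log N$, the probability bound of Theorem~\ref{th:main} would follow from
\[
Z_{N,\beta} \geq \exp\bigl(-\beta \min_{\S^N}\HN - N\log\beta - 8N\log N + \cO(N)\bigr),
\]
and the mean bound will then come from integrating this tail estimate in $t$.

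For the partition function lower bound I localize a minimizer. Fix $x^* = (x_1^*,\ldots,x_N^*)\in\S^N$ achieving $\min_{\S^N}\HN$, and restrict the integral defining $Z_{N,\beta}$ to a product of spherical caps $D_r := \prod_{i=1}^N B(x_i^*, r)$ of chordal radius $r$:
\[
Z_{N,\beta} \geq \Big(\frac{r^2}{4}\Big)^{\!N}\exp\!\Big(-\beta \sup_{y\in D_r}\HN(y)\Big).
\]
Two classical inputs about log-energy minimizers on $\S$ are essential here: the separation estimate $\|x_i^*-x_j^*\|\geq c_0/\sqrt{N}$ (Rakhmanov--Saff--Zhou, Dragnev) and a quasi-uniformity bound of the form $\#\{j:\|x_j^*-x\|\leq s\}\lesssim N s^2$ for $s\geq 1/\sqrt{N}$. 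For $r<c_0/(4\sqrt N)$ the triangle inequality gives $\|y_i-y_j\|\geq\tfrac12\|x_i^*-x_j^*\|$, whence a first-order Taylor estimate combined with a dyadic-shell summation around each $x_i^*$ yields
\[
\sup_{D_r}\HN - \min_{\S^N}\HN \;\leq\; 4r\sum_{i\neq j}\frac{1}{\|x_i^*-x_j^*\|} \;\leq\; C\, r\, N^2.
\]
Choosing $r\asymp 1/(\beta N^2)$ so that the exponential slack $\beta\cdot(\sup_{D_r}\HN - \min_{\S^N}\HN)$ is $\cO(1)$, the cap-volume factor contributes $\log(r^2/4)^N=-2N\log\beta-4N\log N+\cO(N)$, and combining this with the Markov bound above gives the announced tail estimate, with constants tunable to fit into the stated $\log\beta+8\log N$ (and with the trivial bound $\P_{N,\beta}\leq 1$ taking over when $\kappa<0$).

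For the mean, I integrate the tail: $\E_\beta[\HN-\min_{\S^N}\HN]=\int_0^\infty \P_{N,\beta}(\HN-\min_{\S^N}\HN>t)\,\d t$, and split at $t_0:=(N/\beta)(\log\beta+8\log N)$; the piece $[0,t_0]$ contributes at most $t_0$, while the piece $[t_0,\infty)$ contributes $\cO(1/\beta)$ by the exponential decay above, giving the stated bound. The main obstacle lies in the partition function estimate: the logarithmic kernel is extremely sensitive to perturbations of close pairs at the separation scale $1/\sqrt N$, which forces $r$ to be as small as $1/(\beta N^2)$, and every bit of that smallness is paid for in the cap-volume factor $(r^2/4)^N$ — this is precisely what produces the $N\log\beta+8N\log N$ loss in the exponent. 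Consequently, most of the actual work sits in making the separation and quasi-uniformity inputs fully explicit and in justifying the Riesz $1$-sum bound $\sum_{i\neq j}1/\|x_i^*-x_j^*\|=\cO(N^2)$ used above.
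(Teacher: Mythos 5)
Your overall architecture is the same as the paper's: lower bound $Z_{N,\beta}$ by restricting to a product of small caps around a minimizer $x^*$, use Dragnev's separation estimate to control the energy there, and feed the resulting constant $C_\beta$ (with $\log Z_{N,\beta}\geq -\beta\min\HN-C_\beta$) into the elementary Markov-type tail bound. The inputs you flag as the hard part are in fact unproblematic: the quasi-uniformity count $\#\{j:\|x_j^*-x\|\leq s\}\lesssim Ns^2$ follows directly from the separation estimate (disjoint balls of radius $\asymp 1/\sqrt N$ packed into a ball of radius $2s$), and the Riesz $1$-sum bound $\cO(N^2)$ then follows by dyadic shells. The genuine gap is elsewhere, in the order of your Taylor estimate. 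Your first-order bound $\sup_{D_r}\HN-\min\HN\leq CrN^2$ forces $r\asymp 1/(\beta N^2)$, so the cap-volume factor costs $-2N\log r+\cO(N)=2N\log\beta+4N\log N+\cO(N)$. This exceeds the claimed $N(\log\beta+8\log N)$ as soon as $\log\beta\gtrsim 4\log N$, so the constants are \emph{not} tunable to fit the stated theorem, which is asserted for all $\beta\geq1$ (and the paper explicitly envisages $\beta$ growing much faster than $N$). The missing idea is to exploit that $x^*$ is a critical point of $\HN$: the gradient vanishes, so the energy increase on the caps is \emph{quadratic} in the displacement, $\sup_{D_r}\HN-\min\HN\lesssim r^2N^3$. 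The paper obtains this by first using subharmonicity of $\log\|\cdot\|^{-1}$ and the maximum principle to push the supremum over the solid product of caps to the product of boundary circles, where each configuration lies on a single geodesic $t\mapsto\sqrt{1-t^2}\,x^*+tv$; the one-variable function $g(t)=\HN(\cdot)$ has $\dot g(0)=0$, and an explicit bound $\ddot g\leq 10N^3$ follows from the separation estimate. With the quadratic rate one takes $r\asymp 1/(\sqrt\beta\,N)$ (i.e.\ $s=\sqrt{N/\beta}$ in the paper's normalization), giving the coefficient $1$ on $\log\beta$ and $2$ on $\log N$, comfortably inside $\log\beta+8\log N$.

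A secondary point: for the mean, integrating the tail and splitting at $t_0=C_\beta/\beta$ yields $\E_\beta[\HN]-\min\HN\leq C_\beta/\beta+1/\beta$, with an extra $1/\beta$ that is not absorbed by the stated bound in the borderline case $N=2$, $\beta=1$. The paper avoids this entirely with a cleaner argument: Jensen's inequality applied to $\log Z_\gamma=\log\int\e^{-(\gamma-\beta)\H}\,\d\P_\beta+\log Z_\beta$ gives $\E_\beta[\H]\leq(\log Z_\gamma-\log Z_\beta)/(\beta-\gamma)$, and combining with the trivial upper bound $\log Z_\gamma\leq-\gamma\inf\H$ and letting $\gamma\to0$ yields exactly $\E_\beta[\H]-\inf\H\leq C_\beta/\beta$. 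You should adopt that device (or build more slack into $C_\beta$) to match the stated constant.
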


Note that given $\beta$ and $N$, the constant $c$ has to be chosen so that $\kappa>0$ since otherwise the first result becomes trivial. We reach the precision $\log N$  for any $N\geq 2$ when $\beta$ is at least of order $N$. For example, by taking $\beta=N$ and $c=10$ in Theorem \ref{th:main}, we obtain the following estimates.

\begin{corollary}  For any $N\geq 2$, if the random configuration $x_1,\ldots,x_N$ on $\S$ has for distribution the Coulomb gas $\P_{N,N}$ at inverse temperature $\beta=N$, then 
$$
\HN(x_1,\ldots,x_N)-\min_{\S^N}\HN \leq 10\log N
$$
with probability at least $1-\e^{-N\log N}$. Moreover,
$$
\E_{N}\big[\HN(x_1,\ldots,x_N)\big]-\min_{\S^N}\HN\leq 9\log N.
$$
\end{corollary}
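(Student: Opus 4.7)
The plan is the standard Coulomb-gas energy-versus-entropy estimate. For any $s > 0$, on the tail event $A_s := \{x \in \S^N : \HN(x) > \min_{\S^N}\HN + s\}$ one has $e^{-\beta \HN(x)} \leq e^{-\beta(\min_{\S^N}\HN + s)}$, hence
\begin{equation*}
\P_{N,\beta}(A_s) \;\leq\; \frac{e^{-\beta(\min_{\S^N}\HN + s)}}{Z_{N,\beta}}.
\end{equation*}
The whole argument then reduces to producing a lower bound of the form $Z_{N,\beta} \geq \exp\!\big(\!-\beta \min_{\S^N}\HN - \Psi(\beta,N)\big)$ with $\Psi(\beta,N) \leq N(\log \beta + 8 \log N)$. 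Once this is in hand, the tail statement follows by setting $s = c \log N$, and the mean bound follows from $\E_\beta[\HN - \min_{\S^N}\HN] = \int_0^\infty \P_{N,\beta}(A_s)\,\d s$ upon splitting at $s_* = \Psi/\beta$: below $s_*$ use $\P \leq 1$, above $s_*$ the remaining exponential tail integrates to $O(1/\beta)$.

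\textbf{Lower bound on $Z_{N,\beta}$.} Fix a minimizer $x^* = (x_1^*, \ldots, x_N^*)$ of $\HN$ and restrict the defining integral of $Z_{N,\beta}$ to the product neighborhood $G_r := \prod_{i=1}^N B(x_i^*, r) \subset \S^N$, where $B(x_i^*, r)$ is the spherical cap of Euclidean radius $r$. The main geometric input is the classical separation estimate for logarithmic energy minimizers on the sphere, namely $\|x_i^* - x_j^*\| \geq c_0/\sqrt N$ for some absolute $c_0 > 0$. For $r \leq c_0/(4\sqrt N)$ this guarantees $\|x_i - x_j\| \geq \|x_i^* - x_j^*\|/2$ on $G_r$, and a one-line expansion of $-\log$ yields
\begin{equation*}
\HN(x) - \HN(x^*) \;\leq\; 4r \sum_{i\neq j} \frac{1}{\|x_i^* - x_j^*\|}.
\end{equation*}
The Riesz-$1$ sum on the right can be bounded by a polynomial power of $N$---for instance by $O(N^2)$ using the finiteness of the continuous energy $\iint \|x-y\|^{-1}\,\d\sigma(x)\,\d\sigma(y) = 1$, or more crudely by $O(N^{5/2})$ from separation alone. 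Combined with the cap-area bound $\sigma(B(x_i^*, r)) \gtrsim r^2$, this yields
\begin{equation*}
Z_{N,\beta} \;\geq\; \exp\!\big(\!-\beta \min_{\S^N}\HN - \beta C\, r\, \mathrm{poly}(N)\big)\, (c\, r^2)^N.
\end{equation*}
Choosing $r$ of order $1/(\beta\, \mathrm{poly}(N))$ to balance the two exponential contributions (subject to $r \leq c_0/(4\sqrt N)$) produces $\Psi(\beta,N)$ of the claimed form.

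\textbf{Main obstacle.} The numerical constants in $\kappa$ are tight: the $(c r^2)^N$ entropy term naturally contributes coefficient $2$ on $\log \beta$, whereas the theorem asks for coefficient $1$. Achieving this likely requires exploiting that $x^*$ is a critical point of $\HN$ and replacing the linear Taylor estimate above by a Hessian-type bound $\HN(x) - \HN(x^*) \leq C r^2 \sum_{i\neq j} \|x_i^* - x_j^*\|^{-2}$, which trades the $O(r)$ loss for $O(r^2)$ and halves the $\log \beta$ coefficient after re-optimization; alternatively, a suboptimal but explicit choice of $r$ may suppress the $\log \beta$ cost at the price of absorbing extra $\log N$ factors into the allowed $8\log N$. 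Verifying that the resulting $\Psi$ actually fits under $N(\log \beta + 8 \log N)$ is the only genuinely delicate step.
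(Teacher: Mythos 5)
Your overall architecture is the same as the paper's: bound the tail probability by $\e^{-\beta(\min\HN+s)}/Z_{N,\beta}$, then lower-bound $Z_{N,\beta}$ by restricting the integral to a product of small caps around a minimizer, using Dragnev's separation estimate and the explicit cap measure, and optimizing the radius. However, the technical heart of the argument is missing. Your energy increment on the neighborhood is only first order, $\HN(x)-\HN(x^*)\leq 4r\sum_{i\neq j}\|x_i^*-x_j^*\|^{-1}$, and as you yourself note this yields $\Psi \approx 2N\log\beta + kN\log N + O(N)$ rather than $N\log\beta+\ldots$. What the paper actually proves (its Proposition 3.1) is the second-order bound you flag as "likely required": if all points stay within geodesic distance $\arcsin(s/(\sqrt5 N^{3/2}))$ of the minimizer then $\HN(x)\leq\min\HN+s^2$. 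Establishing this is a genuine piece of work — the paper uses subharmonicity of $\log\|\cdot\|^{-1}$ off the singularity and Hopf's maximum principle to push the maximum of $\HN$ to the distinguished boundary of the product of caps, where points move along geodesics, and then bounds the one-dimensional second derivative along such geodesic variations explicitly via Cauchy--Schwarz and the separation bound $\|x_i^*-x_j^*\|\geq 2/\sqrt{N-1}$, obtaining $\ddot g\leq 10N^3$. You cannot simply defer this: even for the specific case $\beta=N$ of the corollary, where $2\log\beta=2\log N$ might hope to be absorbed into the $8\log N$, the constants from your first-order route do not verifiably fit. With the crude $N^{5/2}$ bound on the Riesz-$1$ sum one gets $\Psi = N(5\log N + 2+\log 4)$, and the requirement $\Psi\leq 9N\log N$ becomes $4\log N\geq 3.39$, which fails at $N=2$; with the sharper $O(N^2)$ bound you would need an explicit constant close to the continuous energy, which you do not prove.

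There is a second, smaller gap in the expectation bound. The layer-cake split at $s_*=\Psi/\beta$ gives $\E_\beta[\HN]-\min\HN\leq \Psi/\beta + 1/\beta$, not $\Psi/\beta$. The paper avoids this loss with a different argument (its Lemma 2.2): Jensen's inequality applied to $\log Z_\gamma = \log\int \e^{-(\gamma-\beta)\HN}\d\P_\beta + \log Z_\beta$ for $0<\gamma<\beta$, combined with $\log Z_\gamma\leq -\gamma\min\HN$ and $\gamma\to0$, yields exactly $C_\beta/\beta$. This matters here because the constant $8$ in the theorem leaves very little slack at $N=2$ (the true bound is $N(1+\log\beta+2\log N+\log 20)$, and the margin to $N(\log\beta+8\log N)$ at $N=2$ is about $0.33$), so the extra $1/\beta=1/2$ breaks the claimed $9\log N$ at $N=2$. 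In short: the strategy is right and you correctly diagnosed where the difficulty lies, but the two estimates that make the stated constants work — the Hessian-type bound near the minimizer and the loss-free expectation bound — are exactly the parts left unproven.
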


Thus, if one accepts stochastic algorithms  as solutions for the precise version of Smale's 7th problem, it remains to show that one can sample a configuration from $\P_{N,N}$ in polynomial time, or at least approximate configurations which are close from those of $\P_{N,N}$, say, in total variation, with high probability.  Of course, letting $\beta$ growing with $N$ faster than a linear rate leads to improved convergence results, and even allows $c$ to decay to zero as $N\to\infty$, but we expect that the larger $\beta$ is the  harder it is to sample such configurations in practice.

The proof of the theorem relies on two facts. First, a general concentration inequality for the energy of arbitrary Gibbs measures provided in Section \ref{sec:Gibbs}: we observe in a general setting that an explicit control on the probability that $\HN(x)-\min\HN>\delta$, as well as an upper bound on the mean energy $\E_{\beta}[\HN]$, can be made using solely a lower bound  for $\log Z_{N,\beta}+\beta\min\HN$. This lower bound can be easily derived when an upper bound on the second derivative of the energy is known.  In Section \ref{sec:Spherebound}, we work out such an upper bound in the case of the Coulomb gas on $\S$ and prove Theorem \ref{th:main}.

\section{Concentration for the Gibbs measure's energy}
\label{sec:Gibbs}

In this section, we consider the following general setting: Let $S$ be any non-empty measurable space equipped with a probability measure $\mu$ and a measurable map $\H:S\to\R\cup\{+\infty\}$ such that $\inf_S\H>-\infty$. Consider for any $\beta>0$ the probability measure,
\eq
\label{Pbeta}
\P_\beta(\d x):=\frac{1}{Z_\beta}\,\e^{-\beta\H(x)}\mu(\d x),\qquad Z_\beta:=\int \e^{-\beta\H(x)}\mu(\d x),
\qe
and assume that $Z_\beta$ is finite and does not vanish so that $\P_\beta$ is well defined. In particular $\inf_S\H<+\infty.$ In the following, $\E_\beta$ stands for the expectation with respect to $\P_\beta$.

We first observe that one can relate the deviations of the random variable $\H(x)$ from  $\inf_S\H$, when $x$ has distribution $\P_\beta$,  to a lower bound on $\log Z_{\beta}  +\beta\inf_{S}\H$.

\begin{lemma}
\label{pr:Gibbs} Let  $C_\beta$ be any constant satisfying
\eq
\label{subtineq}
\log Z_{\beta}\geq  -\beta\inf_{S}\H- C_{\beta}.
\qe
Then, for any $\delta>0$,
\eq
\label{keyineq}
\P_{\beta}\Big(\H(x)-  \inf_{S}\H> \delta\Big)\leq \e^{-\beta \delta+ C_{\beta}}.
\qe
\end{lemma}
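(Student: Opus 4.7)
The plan is to prove Lemma \ref{pr:Gibbs} by a direct Markov-type estimate exploiting the explicit form of the Gibbs density. The key observation is that on the event $\{\H(x) - \inf_S \H > \delta\}$ the density $\e^{-\beta \H(x)}/Z_\beta$ is pointwise bounded by $\e^{-\beta(\inf_S\H + \delta)}/Z_\beta$, so integrating over this event and using that $\mu$ is a probability measure immediately produces an upper bound of the form $\e^{-\beta(\inf_S\H + \delta)}/Z_\beta$.

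Concretely, I would first write
\[
\P_{\beta}\Big(\H(x)-\inf_S\H>\delta\Big) \;=\; \frac{1}{Z_{\beta}}\int_{\{\H(x)>\inf_S\H+\delta\}} \e^{-\beta\H(x)}\,\mu(\d x),
\]
and then estimate the integrand pointwise by $\e^{-\beta(\inf_S\H+\delta)}$ on the region of integration. Bounding the remaining $\mu$-measure by $1$, this gives
\[
\P_{\beta}\Big(\H(x)-\inf_S\H>\delta\Big) \;\leq\; \frac{\e^{-\beta\inf_S\H - \beta\delta}}{Z_{\beta}}.
\]

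To conclude, I would invoke hypothesis \eqref{subtineq}, which is exactly the assumption $1/Z_\beta \leq \e^{\beta\inf_S\H + C_\beta}$ after exponentiating. Multiplying the two estimates, the terms $\pm\beta\inf_S\H$ cancel and we recover the claimed bound $\e^{-\beta\delta + C_\beta}$.

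There is essentially no obstacle here: the lemma is a quantitative version of Markov's inequality calibrated to the Boltzmann weight, and its content is really to isolate the single analytic input $C_\beta$ needed for the rest of the argument. The substantive work, deferred to Section \ref{sec:Spherebound}, will be to produce a useful $C_\beta$ for the Coulomb gas on $\S$, which is what actually drives the proof of Theorem \ref{th:main}.
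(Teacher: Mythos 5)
Your proposal is correct and is essentially identical to the paper's proof: both restrict the integral defining $\P_\beta$ to the event $\{\H(x)-\inf_S\H>\delta\}$, bound the Boltzmann weight pointwise by $\e^{-\beta(\inf_S\H+\delta)}$, use that $\mu$ is a probability measure, and then invoke \eqref{subtineq} to absorb $1/Z_\beta$. No differences worth noting.
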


Note that since a rough upper bound yields $\log Z_\beta\leq  -\beta\inf_S\H$, the constant $C_\beta$ has to be non-negative, and $C_\beta=0$ if and only if $\H$ is $\mu$-a.s.  constant on $S$.
\begin{proof}
Indeed, since $\mu$ is a probability measure,
$$
\P_{\beta}\Big(\H(x)-  \inf_{S}\H> \delta\Big) \leq \frac1{Z_\beta}\,\e^{-\beta(\delta+\inf_S\H)} \leq  \,\e^{-\beta\delta+C_\beta},
$$
where we used \eqref{subtineq} for the second inequality. 
\end{proof}

The identity
 $$\E(X)=\int_0^\infty  \P(X>t)\d t\ ,$$ 
 which holds  for any positive random variable $X$, yields together with \eqref{keyineq} that
\eq
\label{Eest}
 \E_\beta\big[\H(x)\big]-\inf_S\H\leq \frac{\e^{ C_{\beta}}}{\beta}\ ,
\qe
and in particular  $\H(x)\in L^1(\P_\beta).$ Having in mind that $\beta$ may be taken large and that $C_\beta$ could grow with $\beta$,  one can obtain a better bound than \eqref{Eest} as follows.

\begin{lemma} 
\label{pr:Expbound}
Under the assumptions of Lemma \ref{pr:Gibbs}, 
\eq
\label{expectation} 
 \E_\beta\big[\H(x)\big]-\inf_{S}\H \leq \frac{C_{\beta}}\beta. 
\qe
\end{lemma}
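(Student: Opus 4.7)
The plan is to sharpen the bound \eqref{Eest}---which loses a factor $\e^{C_\beta}$ versus $C_\beta$---by exploiting the explicit Boltzmann form of the density $\d\P_\beta/\d\mu$ rather than integrating the tail estimate \eqref{keyineq}. The key observation is that the relative entropy of $\P_\beta$ with respect to $\mu$ can be computed in closed form, and its non-negativity directly yields the sharp constant; in other words, one invokes the Gibbs variational principle at the minimizer $\P_\beta$.

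Concretely, I would start from the explicit log-density
$$\log\frac{\d\P_\beta}{\d\mu}(x)=-\beta\,\H(x)-\log Z_\beta,$$
which is well defined $\P_\beta$-almost surely since $\H<+\infty$ there. Integrating this identity against $\P_\beta$---legitimate because $\H(x)\in L^1(\P_\beta)$, as already noted just after \eqref{Eest}---produces the Kullback--Leibler divergence
$$\mathrm{KL}(\P_\beta\,\|\,\mu)=-\beta\,\E_\beta\bigl[\H(x)\bigr]-\log Z_\beta.$$
Since $\mathrm{KL}(\P_\beta\,\|\,\mu)\geq 0$ by Jensen's inequality, we deduce the clean estimate $\E_\beta[\H(x)]\leq -\log Z_\beta/\beta$. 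Plugging in the hypothesis \eqref{subtineq} rewritten as $-\log Z_\beta\leq \beta\,\inf_S\H+C_\beta$ and dividing by $\beta$ then delivers \eqref{expectation}.

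There is no real obstacle: the argument is essentially a one-liner once one recognizes $\P_\beta$ as the minimizer of the free-energy functional $\nu\mapsto \beta\int\H\,\d\nu+\mathrm{KL}(\nu\,\|\,\mu)$ over probability measures $\nu\ll\mu$. The only point to verify is the finiteness of $\E_\beta[\H(x)]$ so that the entropy identity above is meaningful, and this is already provided by the discussion preceding the statement. The gain of the exponential factor over \eqref{Eest} comes exactly from not throwing away the entropy contribution---the tail-integration route in \eqref{Eest} bounds everything uniformly by $\e^{C_\beta}$, while the present route keeps the entropy identity tight.
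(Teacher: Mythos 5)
Your proof is correct. It is, at bottom, the same mechanism as the paper's---Jensen's inequality applied to the log-density of $\P_\beta$ with respect to $\mu$---but packaged differently. The paper compares $\log Z_\gamma$ with $\log Z_\beta$ for an auxiliary $\gamma\in(0,\beta)$ via Jensen, uses the rough bound $\log Z_\gamma\leq-\gamma\inf_S\H$, and lets $\gamma\to0$; your argument is precisely the $\gamma=0$ endpoint of that computation, since
$$
\mathrm{KL}(\P_\beta\,\|\,\mu)=-\beta\,\E_\beta\big[\H(x)\big]-\log Z_\beta\geq 0
$$
is exactly the inequality $\log Z_0\geq(\beta-0)\E_\beta[\H(x)]+\log Z_\beta$ with $\log Z_0=\log\mu(S)=0$. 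What you gain is the elimination of the auxiliary parameter and the limit: the non-negativity of relative entropy (equivalently, the Gibbs variational principle) delivers $\E_\beta[\H(x)]\leq-\log Z_\beta/\beta$ in one step, after which \eqref{subtineq} finishes the proof. What the paper's formulation buys is that it never needs to evaluate $Z_\gamma$ at $\gamma=0$ explicitly and keeps the whole argument inside the one-parameter family of Gibbs measures; but since $\mu$ is a probability measure here, your shortcut is legitimate. You correctly flag the one point that needs checking, namely $\H(x)\in L^1(\P_\beta)$, which the discussion around \eqref{Eest} provides (together with $\H\geq\inf_S\H>-\infty$), so the entropy identity is well defined. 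No gap.
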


\begin{proof} Let $0<\gamma<\beta$. Since $\H(x)\in L^1(\P_\beta)$ Jensen's inequality yields, 
$$
\log Z_\gamma=\log\int \e^{-(\gamma-\beta)\H(x)}\P_{\beta}(\d x)+\log Z_\beta\geq -(\gamma-\beta)\E_\beta\big[\H(x)\big]+\log Z_\beta
$$
and thus
$$
\E_\beta\big[\H(x)\big]\leq  \frac{\log Z_\gamma-\log Z_\beta}{\beta-\gamma}.
$$
Together with the rough upper bound
$$
\log Z_{\gamma}\leq -\gamma \inf_{S}\H
$$
and the definition of $C_\beta$, see  \eqref{subtineq}, we  obtain
 $$
 \E_\beta\big[\H(x)\big]\leq  \inf_{S}\H +\frac{ C_{\beta}}{\beta-\gamma}\, .
 $$
The lemma  follows by letting $\gamma\to 0$. 
\end{proof}

As we shall see in the next section, one way to obtain such a constant $C_\beta$ is to use an upper bound on the order two Taylor expansion of $\H$ near a minimizer.

\section{Proof of Theorem \ref{th:main}}
\label{sec:Spherebound}

Let $d_\S$ be the usual geodesic distance on the sphere $\S$,
$$
d_\S(x,y)=\arccos\langle x,y\rangle_{\R^3} \,,\qquad x,y\in\S\,,
$$
where $\langle \cdot,\cdot\rangle_{\R^3}$ stands for the usual inner product of $\R^3$. To prove Theorem \ref{th:main} we will use the following estimate as a key ingredient.

\begin{proposition}
\label{th:bh}
 Let $N\geq2$ and $x^*\in\S^N$ be a minimizer of $\HN$. If $x\in\S^N$ satisfies
 \[
\max_{1\leq i\leq N} d_\S(x_i,x_i^*)\leq\arcsin\left(\frac{s}{\sqrt 5 N^{3/2}}\right)
 \]
for some $0\leq s\leq \sqrt{5N}/2$, then  
 \[
  \HN(x)\leq \min_{\S^N}\HN+s^2.
 \]
\end{proposition}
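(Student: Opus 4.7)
The plan is a second-order Taylor expansion of $\HN$ along a geodesic path in the Riemannian product $\S^N$ joining the minimizer $x^*$ to $x$, exploiting that at a minimum the first-order term vanishes so the change in energy is controlled by the Hessian alone.

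Concretely, for each $i$ put $\alpha_i := d_\S(x_i^*, x_i)$ and consider the constant-speed geodesic $\gamma_i(t) = \cos(t\alpha_i)\, x_i^* + \sin(t\alpha_i)\, u_i$, with $u_i \in T_{x_i^*}\S$ a unit vector, so that $\|\gamma_i'(t)\| \equiv \alpha_i$ and $\gamma_i''(t) = -\alpha_i^2\, \gamma_i(t)$. Let $\phi(t) := \HN(\gamma_1(t),\dots,\gamma_N(t))$. Because $x^*$ is a minimizer of $\HN$ on $\S^N$ and $(\gamma_i'(0))_i \in T_{x^*}\S^N$, the first derivative $\phi'(0) = 0$; Taylor's theorem in integral form then gives
\[
\phi(1) - \phi(0) \;=\; \int_0^1 (1-t)\,\phi''(t)\,dt \;\leq\; \tfrac{1}{2}\sup_{t \in [0,1]} \phi''(t),
\]
so it suffices to establish $\phi''(t) \leq 2 s^2$ uniformly on $[0,1]$.

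For a single pair $i \neq j$, writing $r(t) = \gamma_i(t) - \gamma_j(t)$, a direct computation using $\gamma_k'' = -\alpha_k^2\, \gamma_k$ and $\|\gamma_k\| = 1$ yields the identity $\langle r, r''\rangle = -\tfrac{1}{2}(\alpha_i^2 + \alpha_j^2)\|r\|^2$. Plugging this into the standard formula for the second derivative of $-\log\|r(t)\|$ and using Cauchy--Schwarz $\langle r, r'\rangle^2 \leq \|r\|^2 \|r'\|^2$ delivers the pairwise bound
\[
\frac{d^2}{dt^2}\bigl(-\log\|r(t)\|\bigr) \;\leq\; \frac{(\alpha_i+\alpha_j)^2}{\|r(t)\|^2} \;+\; \frac{\alpha_i^2 + \alpha_j^2}{2}.
\]
To control $\|r(t)\|$ along the path I would use $\|r(t)\| \geq \|x_i^* - x_j^*\| - (\alpha_i + \alpha_j)$ from the triangle inequality, together with the classical quantitative separation lower bound $\|x_i^* - x_j^*\| \geq c_0/\sqrt N$ for log-energy minimizers on $\S$. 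With the hypothesis $\sin\alpha_i \leq s/(\sqrt 5\, N^{3/2})$ and $s \leq \sqrt{5N}/2$ the quantity $\alpha_i + \alpha_j$ is much smaller than this separation, so $\|r(t)\|^2$ remains comparable to $\|x_i^* - x_j^*\|^2$; the factor $\sqrt 5$ in the hypothesis is calibrated so that a loss of the form $\|r(t)\|^2 \geq \|x_i^* - x_j^*\|^2/5$ suffices. Summing the pairwise bound over the $N(N-1)$ ordered pairs, combined with $\sum_{i \neq j}\|x_i^*-x_j^*\|^{-2} \leq N^3/c_0^2$ (which again follows from the separation estimate) and $\alpha_i^2 \leq C\, s^2/N^3$ extracted from the hypothesis, collapses the right-hand side to $\phi''(t) \leq 2 s^2$.

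The main obstacle is the constant chase in the last step: one must invoke a sharp minimal-separation lower bound for log-energy minimizers on the sphere (a classical but nontrivial input, in the spirit of Dahlberg or Rakhmanov--Saff--Zhou) and track all numerical factors through the pairwise estimate carefully enough that the chain of inequalities closes at exactly $s^2$. The factor $\sqrt 5$ and the range $0 \leq s \leq \sqrt{5N}/2$ appear to be tuned precisely to balance the separation loss against the accumulated Cauchy--Schwarz constants.
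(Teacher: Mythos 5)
Your proposal is correct, and it takes a genuinely different route from the paper. The paper does not Taylor-expand directly from $x^*$ to an arbitrary $x$ in the product of caps; instead it first uses that $y\mapsto\log\|y-p\|^{-1}$ is subharmonic on $\S\setminus\{p\}$ (via the Poisson equation $\Delta_\S\log\|\cdot\|^{-1}=2\pi(\sigma-\delta_0)$) together with Hopf's maximum principle, applied coordinate by coordinate, to reduce the maximum of $\HN$ over $B_1\times\cdots\times B_N$ to the distinguished boundary $\partial B_1\times\cdots\times\partial B_N$; only then does it perform a one-parameter Taylor--Lagrange expansion along the radial curves $\sqrt{1-t^2}\,x^*+tv$, in which every particle is displaced by the \emph{same} geodesic distance, and bounds the second derivative by a brute-force computation combined with Dragnev's separation bound. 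You bypass the maximum-principle step entirely by moving each particle along its own geodesic at its own speed $\alpha_i$; the price is slightly heavier bookkeeping, but the payoff is a proof with no PDE input and a cleaner second-derivative estimate: your identity $\langle r,r''\rangle=-\tfrac12(\alpha_i^2+\alpha_j^2)\|r\|^2$ (which I checked, using $\langle r,\gamma_i\rangle=\tfrac12\|r\|^2=-\langle r,\gamma_j\rangle$) plays the role of the paper's explicit formula for $\langle\gamma_{ij},\ddot\gamma_{ij}\rangle$, and both arguments hinge on the same two inputs, namely $\phi'(0)=0$ at the minimizer and the separation bound $\|x_i^*-x_j^*\|\geq 2/\sqrt{N-1}$. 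Your constant chase does close: with $\arcsin u\leq 1.02\,u$ for $u\leq 1/4$ one gets $\alpha_i^2\leq 1.05\,s^2/(5N^3)$, the triangle inequality gives $\|r(t)\|^2\geq\|x_i^*-x_j^*\|^2/5$ on the stated range of $s$, and then
\[
\sum_{i\neq j}\frac{(\alpha_i+\alpha_j)^2}{\|r(t)\|^2}\leq 20\,\alpha_{\max}^2\sum_{i\neq j}\|x_i^*-x_j^*\|^{-2}\leq 5\,\alpha_{\max}^2 N^3\leq 1.05\,s^2,
\qquad
(N-1)\sum_i\alpha_i^2\leq \frac{1.05\,s^2}{5N}\leq 0.11\,s^2,
\]
so $\phi''\leq 1.2\,s^2\leq 2s^2$ with room to spare (just be aware that the cruder bound $\arcsin u\leq\tfrac\pi2 u$ alone is not quite enough in the first sum unless you also keep the extra slack available in the $\|r(t)\|^2\geq\|x_i^*-x_j^*\|^2/5$ step). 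The only items to make fully explicit in a final write-up are the non-collision of the path (immediate from your lower bound on $\|r(t)\|$, which guarantees $\phi$ is smooth on $[0,1]$) and the two elementary sums above.
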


A similar estimate is provided in \cite[Theorem 1.8]{Bel13}. Proposition~\ref{th:bh}  improves the range of validity of this result with an alternative proof.

We will also rely on the following result of \cite{Dra02} for the separation distance. 

\begin{proposition} 
\label{le:sep}
 Let $N\geq2$ and $x^*\in\S^N$ be a minimizer of $\HN$.  We have
$$
\min_{i\neq j}\|x_i^*-x_j^*\|\geq \frac2{\sqrt{N-1}}.
$$
\end{proposition}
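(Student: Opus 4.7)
The plan is to follow Dragnev's argument, which hinges on a maximality property of minimizers. The starting point is that if $x^*=(x_1^*,\ldots,x_N^*)$ minimizes $\HN$ on $\S^N$, then for each index $j$ the coordinate $x_j^*$ is a minimizer on $\S$ of the one-variable restriction $y\mapsto \HN(x_1^*,\ldots,x_{j-1}^*,y,x_{j+1}^*,\ldots,x_N^*)$; equivalently, $x_j^*$ is a \emph{maximizer} over $\S$ of the polynomial-type function $F_j(y):=\prod_{i\neq j}\|y-x_i^*\|^2$.

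Next I would bound $F_j(x_j^*)=\prod_{i\neq j}\|x_j^*-x_i^*\|^2$ from below by the spherical average $\int_\S F_j\,\d\sigma$. Using stereographic projection from $-x_j^*$, which sends $x_j^*$ to the origin and each $x_i^*$ to a point $z_i\in\C$, together with the conformal identities $\|y-x_i^*\|^2=4|z-z_i|^2/[(1+|z|^2)(1+|z_i|^2)]$ and $\sigma(\d y)=\d A(z)/[\pi(1+|z|^2)^2]$, the spherical integral reduces (up to an explicit constant depending on $\prod_{i\neq j}(1+|z_i|^2)$) to $\int_\C |P(z)|^2(1+|z|^2)^{-N-1}\,\d A(z)$, where $P(z):=\prod_{i\neq j}(z-z_i)$ is monic of degree $N-1$.

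The monomials $z^k$ are pairwise orthogonal under the weight $(1+|z|^2)^{-N-1}\,\d A(z)$, with squared norms $\pi/[N\binom{N-1}{k}]$ (a Beta-function computation). Expanding $P=\sum_k a_k z^k$, the weighted $L^2$ norm becomes $\sum_k \pi|a_k|^2/[N\binom{N-1}{k}]$; retaining the leading contribution $|a_{N-1}|^2=1$ and the constant-term contribution $|a_0|^2=\prod_{i\neq j}|z_i|^2$ gives the lower bound $\pi(1+\prod_{i\neq j}|z_i|^2)/N$. Translating back via $1+|z_i|^2=4/(4-d_{ji})$ and $|z_i|^2=d_{ji}/(4-d_{ji})$, with $d_{ji}:=\|x_j^*-x_i^*\|^2$, yields the product inequality $\prod_{i\neq j}(4-d_{ji})\leq (N-1)\prod_{i\neq j}d_{ji}$.

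The remaining task is to extract $d_{\min}^2:=\min_{i\neq j}d_{ji}$ from this inequality: applied to the index $j$ realizing the global minimum and isolating the factor $(4-d_{\min}^2)/d_{\min}^2$, which is the largest among the $(4-d_{ji})/d_{ji}$, should deliver $d_{\min}^2\geq 4/(N-1)$, hence the bound $\min_{i\neq j}\|x_i^*-x_j^*\|\geq 2/\sqrt{N-1}$. The main obstacle is controlling the remaining $N-2$ factors: if they are all at least $1$ (which is equivalent to every pair of points sitting at angular distance at most $\pi/2$) the extraction is immediate, but in general one needs either to combine the inequality across several choices of $j$, or to exploit further structure of the polynomial bound, to avoid losing the correct constant in the final step.
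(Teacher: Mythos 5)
The paper does not prove this proposition at all: it is imported verbatim from Dragnev (2002), so there is no internal argument to compare against, and your proposal has to stand on its own. Up to the product inequality it does: the maximality of each $x_j^*$ for $F_j(y)=\prod_{i\neq j}\|y-x_i^*\|^2$, the conformal identities, the monomial norms
\begin{equation*}
\int_\C |z|^{2k}(1+|z|^2)^{-N-1}\,\d A(z)=\frac{\pi}{N\binom{N-1}{k}},
\end{equation*}
and the dictionary $1+|z_i|^2=4/(4-d_{ji})$, $|z_i|^2=d_{ji}/(4-d_{ji})$ are all correct, and they do give $\prod_{i\neq j}(4-d_{ji})\leq (N-1)\prod_{i\neq j}d_{ji}$, i.e. $\prod_{i\neq j}\frac{4-d_{ji}}{d_{ji}}\leq N-1$. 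But the obstacle you flag at the end is not a technicality to be smoothed over; it is where the theorem lives, and as stated your route cannot cross it. The factor $(4-d_{ji})/d_{ji}$ is less than $1$ as soon as $\|x_j^*-x_i^*\|>\sqrt 2$ (obtuse pairs) and tends to $0$ for near-antipodal pairs; for large $N$ a minimizer is roughly uniformly spread, so for \emph{every} choice of $j$ about half the factors are below $1$, some nearly $0$. Hence the product bound carries no usable information about its largest factor, no bound of order $N^{-1/2}$ on $d_{\min}$ follows from it alone, and combining the inequality over several $j$ does not help since the same degeneracy occurs for each $j$.

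Moreover, even granting your favorable hypothesis that the other $N-2$ factors are all at least $1$, the extraction gives $(4-d_{\min}^2)/d_{\min}^2\leq N-1$, i.e. $d_{\min}\geq 2/\sqrt N$, \emph{not} the stated $2/\sqrt{N-1}$: the latter is equivalent to $(4-d_{\min}^2)/d_{\min}^2\leq N-2$, so you would additionally need the remaining factors to multiply to at least $(N-1)/(N-2)>1$. So the claim that the favorable case "immediately" delivers $d_{\min}^2\geq 4/(N-1)$ is off by exactly this shift. (Incidentally, the paper only invokes the weaker consequence $\|x_i^*-x_j^*\|\geq 2/\sqrt N$ in its proof of Proposition~\ref{th:bh}, but your argument does not establish that either.) Note finally that your averaging step retains only two coefficients of a much stronger fact that the same maximality yields pointwise, namely $|P(z)|^2\leq |P(0)|^2(1+|z|^2)^{N-1}$ for all $z\in\C$; any repair along these lines must exploit such pointwise information in a way that isolates the nearest root \emph{before} passing to a symmetric product over all pairs, rather than attempting to peel it off afterwards — that closing step is the actual content of the cited source and is missing here.
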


 We are now in position to provide a proof for our main theorem.

\begin{proof}[Proof of Theorem \ref{th:main}] 

For any $0\leq r\leq\pi$ and $x\in\S$, the volume of a spherical cap $B_\S(x,r):=\{y\in\S:\, d_\S(x,y)\leq r\}$  is explicit: recalling $\sigma$ is  normalized so that $\sigma(\S)=1$, 
$$
\sigma\big(B_\S(x,r)\big)=\sin^2(\frac r2).
$$
In particular, for any $t\in(0,1)$,
$$
\sigma\Big(B_\S(x,\arcsin(t))\Big)=\sin^2\left(\frac12\arcsin(t)\right)=\frac{1-\sqrt{1-t^2}}2\geq  \frac{t^2}4.
$$
For any $N\geq 2$ and any minimizer $x^*\in\S^N$ of $\HN$ we set, for any $0<s\leq \sqrt{5N}/2$, 
$$
\Omega:=\left\{x\in\S^N:\, \max_{1\leq i\leq N} d_\S(x_i,x_i^*)\leq  \arcsin\left(\frac{s}{\sqrt 5 N^{3/2}}\right)\right\}.
$$
Thus,
$$
\sigma^{\otimes N}(\Omega)\geq \left( \frac{s^2}{20N^{3}}\right)^N.
$$
Next, assume $\beta\geq 1$ and we use Proposition \ref{th:bh} to obain the lower bound
\begin{align*}
 \log Z_{N,\beta}& \geq\log\int_{\Omega}\e^{-\beta\HN(x)}\,\sigma^{\otimes N }(\d x)\\
& \geq -\beta\min_{\S^N}\HN-\beta  s^2+\log\sigma^{\otimes N}(\Omega)\\
& \geq -\beta\min_{\S^N}\HN-\beta  s^2 +N\log\left( \frac{s^2}{20N^{3}}\right).
\end{align*}
Since this holds for any $0<s\leq \sqrt{5N}/2$ and $N\geq 2$, we obtain
\begin{align*}
 \log Z_{N,\beta}+\beta\min_{\S^N}\HN & \geq \max_{0<s\leq \sqrt{5N}/2}\left(-\beta  s^2 +N\log\left( \frac{s^2}{20N^{3}}\right)\right)\\
 &\geq  -N\big( 1+\log\beta+2\log N+\log20\big)\\
& \geq  -N\big( \log\beta+8\log N\big).
\end{align*}
We used that the maximum is reached at $s=\sqrt{N/\beta}$ and  $\sqrt{N/\beta}\leq \sqrt{5N}/2$ because $\beta\geq 1$. Thus, we have obtained the lower bound \eqref{subtineq} with 
$$
C_\beta=N\big(\log\beta+8\log N\big),$$ 
and Theorem  \ref{th:main} follows from Lemma \ref{pr:Gibbs} by taking $\delta=c\log N$ and Lemma \ref{pr:Expbound}.
\end{proof}

We finally turn to the proof of Proposition~\ref{th:bh}.

\begin{proof}[Proof of Proposition~\ref{th:bh}]From now, let $N\geq 2$, let $x^*\in\S^N$ be any minimizer of $\HN$ and  $0<t\leq 1/{(2N)}$. We set for convenience, for any $1\leq j\leq N$,
 \eq
 \label{Omega}
B_j:=B_\S(x_j^*,\arcsin(t))
 \qe
where we recall that $B_\S(x,r)$ is the ball of radius $r$ centered at $x\in\S$ associated with the geodesic distance $d_\S$. Since $\|x-y\|\leq d_\S(x,y)$ for any $x,y\in\S$ and $\arcsin(t)\leq \sqrt{2 t}$ for any $0<t\leq1/4$, Proposition \ref{le:sep} and the constraint on $t$ yields that the $\overline B_j$'s are disjoint subsets of $\S$ for any $N\geq 2$.

We equip the sphere $\S\subset\R^3$ with its usual Riemannian structure inherited from $\R^3$, whose associated distance is $d_\S$, and let $\Delta_\S$  be the associated Laplace-Beltrami operator. It is well known that $\log\|\cdot\|^{-1}$ satisfies the Poisson equation, namely
\eq
\label{Poisson}
\Delta_\S\log\frac1{\|\cdot\|}=2\pi(\sigma-\delta_0)
\qe 
in distribution, where we recall that $\sigma$ is the uniform probability measure on $\S$, see e.g. \cite[Section 15.6.1]{For10}.  It follows that, for any $p\in\S$, the map $F_p(x):=\log\|x-p\|^{-1}$ satisfies $\Delta_\S F_p(x)=1/2$ when $x\in\S\setminus\{p\}$, and in particular it is subharmonic there.   Thus, for any $(x_1,\ldots,x_N)\in B_1\times \cdots\times B_N$ and any $1\leq k\leq N$, the mapping
\eq
\label{composubh}
y\mapsto \H_N(x_1,\ldots,x_{k-1},y,x_{k+1},\ldots,x_N)
\qe
is  subharmonic on $B_k$ and satisfies the maximum principle. More precisely, by applying the classical Hopf's maximum principle for uniformly elliptic operators,  see e.g. \cite[Theorem 24.1]{Jost},   in the specific case of the Laplace-Beltrami operator $\Delta_\S$ (in coordinates), we have that for any open set $\Omega$ contained in a hemisphere of $\S$ and any $F\in C^2(\Omega)\cap C^0(\overline\Omega)$ which is subharmonic on $\Omega$, 
\[
\sup_{\Omega}F\leq \max_{\partial \Omega}F.
\]
By using $N$ times this inequality for the mappings \eqref{composubh}, we obtain
\eq
\label{subineq}
\max_{ B_1\times \cdots\times B_N}\HN \leq \max_{\partial B_1\times B_2\times \cdots\times B_N} \HN\leq \cdots \leq \max_{\partial B_1\times \partial B_2\times \cdots\times \partial B_N}\HN.
\qe
Next, we observe that
\begin{align}
\label{boundary}
& \partial B_1\times \partial B_2\times \cdots\times \partial B_N\nonumber\\
&  =\Big\{x\in\S^N:\; d_\S(x_i,x_i^*)=\arcsin(t) \text{ for all } 1\leq i\leq N\Big\}\nonumber\\
& =\Big\{\sqrt{1-t^2}x^*+tv:\; v\in \S^N,\; \langle x_i^*,v_i\rangle_{\R^3}=0  \text{ for all } 1\leq i\leq N\Big\}.
\end{align}
Indeed, a geodesic of $\S$ can always be parametrized as $\theta(u)=\sqrt{1-u^2}\, x+ u v$   for $x\in \S$ and $v\in\R^3$ satisfying $\|v\|=1$ and $\langle x,v\rangle_{\R^3}=0$; if $\Theta\subset\S$ is the curve $\{\theta(u)\}_{u\in[0,t]}$ then it starts at $\theta(0)=x$ with initial speed $\dot\theta(0)=v$ and has length 
$$
\text{Length}(\Theta)=\int_0^t\|\dot\theta(u)\|\,\d u=\int_0^t\frac{\d u}{\sqrt{1-u^2}}=\arcsin(t).
$$
In view of \eqref{Omega} and \eqref{subineq}--\eqref{boundary}, it is thus enough to show that, for any $v\in \S^N$ satisfying  $\langle x_i^*,v_i\rangle_{\R^3}=0$ for every $1\leq i\leq N$,
\eq
\label{toprove}
\HN(\sqrt{1-t^2}x^*+tv)\leq \HN(x^*)+5t^2N^3.
\qe
Indeed the proposition follows by setting $s:=\sqrt 5 tN^{3/2}$ which satisfies $0<s\leq \sqrt{5N}/2$.

To do so, let $v\in\S^N$ satisfying  $\langle x_i^*,v_i\rangle_{\R^3}=0$ for all $1\leq i\leq N$ and set $$g(t):=\HN\big(\sqrt{1-t^2}x^*+tv\big).$$ Since $g$ reaches a minimum at $t=0$,  there exists $\alpha\in(0,t)$ such that
\eq
\label{Taylor}
\HN\big(\sqrt{1-t^2}x^*+tv\big)=\HN(x^*)+\frac{t^2}2 \ddot g(\alpha).
\qe
Next, we set for convenience
$$
\gamma_{ij}(t):=\sqrt{1-t^2}(x_i^*-x_j^*)+t(v_i-v_j)
$$
so that we have
$$
g=-\sum_{i\neq j}\log \|\gamma_{ij}\|,\quad \dot g=-\sum_{i\neq j}\frac{\langle \gamma_{ij},\dot\gamma_{ij}\rangle}{ \|\gamma_{ij}\|^2}
$$
and moreover,  using the Cauchy-Schwarz inequality,
\eq
\label{boundgsec}
\ddot g  =\sum_{i\neq j}\left(
2\frac{\langle \gamma_{ij},\dot\gamma_{ij}\rangle^2}{ \|\gamma_{ij}\|^4}
-
\frac{\langle \gamma_{ij},\ddot\gamma_{ij}\rangle+\|\dot\gamma_{ij}\|^2}{ \|\gamma_{ij}\|^2}\right)\\
 \leq \sum_{i\neq j}\frac{\|\dot\gamma_{ij}\|^2-\langle \gamma_{ij},\ddot\gamma_{ij}\rangle}{\|\gamma_{ij}\|^2}\, .
\qe
By computing explicitly the derivatives of $\gamma_{ij}(t)$ we obtain
\begin{align*}
\|\gamma_{ij}(t)\|^2& = (1-t^2) \|x_i^*-x_j^*\|^2+2t\sqrt{1-t^2}\langle x_i^*-x_j^*,v_i-v_j\rangle+t^2\|v_i-v_j\|^2 \\
\|\dot\gamma_{ij}(t)\|^2& = \frac{t^2}{1-t^2}\|x_i^*-x_j^*\|^2-\frac{2t}{\sqrt{1-t^2}}\langle x_i^*-x_j^*,v_i-v_j\rangle+\|v_i-v_j\|^2\\
\langle \gamma_{ij}(t),\ddot\gamma_{ij}(t)\rangle& =  -\frac{1}{1-t^2}\|x_i^*-x_j^*\|^2-\frac{t}{(1-t^2)^{3/2}}\langle x_i^*-x_j^*,v_i-v_j\rangle
\end{align*}
and this yields with \eqref{boundgsec},
$$
\ddot g(t)\leq \sum_{i\neq j}\frac{
\frac{1+t^2}{1-t^2}\|x_i^*-x_j^*\|^2-\frac{t-2t^3}{(1-t^2)^{3/2}}\langle x_i^*-x_j^*,v_i-v_j\rangle+\|v_i-v_j\|^2
}{
(1-t^2) \|x_i^*-x_j^*\|^2+2t\sqrt{1-t^2}\langle x_i^*-x_j^*,v_i-v_j\rangle+t^2\|v_i-v_j\|^2
}.
$$
For any $0<\alpha\leq t$, we use the upper bounds $\|x_i^*-x_j^*\|\leq 2$, $\|v_i-v_j\|\leq 2$,  $|\langle x_i^*-x_j^*,v_i-v_j\rangle|=|\langle x_i^*,v_j\rangle+\langle x_j^*,v_i\rangle|\leq 2$ and $\alpha-2\alpha^3\leq \alpha(1-\alpha^2)$, as well as the lower bound $\|x_i^*-x_j^*\|\geq 2/\sqrt N$ provided by Proposition \ref{le:sep}, to obtain
$$
\ddot g(\alpha) \leq N(N-1)\frac{ 1}{\frac4N(1-t^2)-4t}\left(\frac8{1-t^2}+\frac{2t}{\sqrt{1-t^2}} \right).
$$
Finally, we use the inequality $1-t^2-Nt\geq (1-t^2)/4$ for any $0<t\leq 1/(2N)$ to obtain   
\eq
\label{ddotgineqII}
\ddot g(\alpha) \leq N^3\left( \frac8{(1-t^2)^2}+ \frac{2t}{(1-t^2)^{3/2}} \right)\leq 10N^3.
\qe
Together with \eqref{Taylor} this yields \eqref{toprove}, and the proof of  Proposition~\ref{th:bh} is therefore complete.


\end{proof}

\bibliographystyle{plainnat}


\end{document}